\newtheorem{theorem}{Theorem}[section]
\newtheorem{lemma}[theorem]{Lemma}
\theoremstyle{definition}
\theoremstyle{remark}
\numberwithin{equation}{section}
\begin{document}

\title[The $\alpha$-orthogonal complements]{The $\alpha$-orthogonal complements of regular subspaces of 1-dim Brownian motion}

\author{ Liping Li}
\address{School of Mathematical Sciences, Fudan University, Shanghai 200433, China.}
\email{lipingli10@fudan.edu.cn}

\author{Xiucui Song}
\address{School of Mathematical Sciences, Fudan University, Shanghai 200433, China.}
\email{xiucuisong12@fudan.edu.cn}

\subjclass[2000]{31C25, 60J55, 60J60}



\keywords{Dirichlet forms, regular subspaces, harmonic equation, Neumann boundary condition}

\begin{abstract}
  Roughly speaking, a regular subspace of a Dirichlet form is a subspace, which is also a regular Dirichlet form, on the same state space. In particular, the domain of regular subspace is a closed subspace of the Hilbert space induced by the domain  and $\alpha$-inner product of original Dirichlet form. We shall investigate the orthogonal complement of regular subspace of 1-dimensional Brownian motion in this paper. Our main results indicate that  this orthogonal complement has a very close connection with the $\alpha$-harmonic equation under Neumann boundary condition.
\end{abstract}

\maketitle

\section{Introduction}

The regular subspace of a Dirichlet form is also a regular Dirichlet form on the same Hilbert space. It inherits the same form of original Dirichlet form but possesses a smaller domain. This conception was first raised by Fang, Fukushima and Ying in \cite{FMG} and they characterized all regular subspaces of 1-dim Brownian motion. Then the second author and his co-authors did a series of works on regular subspaces of general Dirichlet forms in \cite{XPJ} \cite{LY2} and \cite{LY}. In \cite{LY2} the authors introduced the trace Dirichlet forms to analyse the structure of regular subspaces of 1-dim Brownian motion. In particular, they decomposed the associated extended Dirichlet space $H^1_\mathrm{e}(\mathbf{R})$ of 1-dim Brownian motion into two parts: the first one is the extended Dirichlet space of  regular subspace and the second one is the pseudo orthogonal complement of regular subspace relative to the energy form of 1-dim Brownian motion. In this paper we shall extend this decomposition to the cases relative to $\alpha$-norm for any $\alpha\geq 0$ and concern their analytical representations.

We refer the terminologies of Dirichlet forms to \cite{CM} and \cite{FU}. Let $E$ be a locally compact separable metric space and $m$ a Radon measure on $E$ fully supported on $E$. Further let $(\mathcal{E}^1,\mathcal{F}^1)$ and $(\mathcal{E}^2,\mathcal{F}^2)$ be two regular Dirichlet forms on $L^2(E,m)$. Then $(\mathcal{E}^1,\mathcal{F}^1)$ is called a \emph{regular subspace} of $(\mathcal{E}^2,\mathcal{F}^2)$ if 
\begin{equation}
	\mathcal{F}^1\subset \mathcal{F}^2,\quad \mathcal{E}^2(u,v)=\mathcal{E}^1(u,v),\quad u,v\in \mathcal{F}^1.
\end{equation}
In particular, if $\mathcal{F}^1$ is a proper subset of $\mathcal{F}^2$, then $(\mathcal{E}^1,\mathcal{F}^1)$ is called a proper regular subspace of $(\mathcal{E}^2,\mathcal{F}^2)$.

It is well known that the associated Dirichlet form of $1$-dim Brownian motion on $L^2(\mathbf{R})$ is 
\[
	(\mathcal{E,F})=(\frac{1}{2}\mathbf{D},H^1(\mathbf{R})),
\]
where $H^1(\mathbf{R})$ is $1$-Sobolev space and for any $u,v\in H^1(\mathbf{R})$, $\mathbf{D}(u,v)=\int_\mathbf{R} u'(x)v'(x)dx$. Fix a constant $\alpha> 0$, 
\[
	\mathcal{E}_\alpha(u,v):=\mathcal{E}(u,v)+\alpha(u,v),\quad u,v\in \mathcal{F},
\]
where $(\cdot,\cdot)$ is the inner product of $L^2(\mathbf{R})$. Note that $\mathcal{F}$ is a Hilbert space with respect to the inner product $\mathcal{E}_\alpha$. As outlined in \cite{FMG} and \cite{LY2}, each regular subspace, denoted by $(\mathcal{E}^{(s)},\mathcal{F}^{(s)})$, of $(\mathcal{E,F})$ can be characterized by a strictly increasing and absolutely continuous function $s$ on $\mathbf{R}$ satisfying
\begin{equation}
	s'(x)=0\text{ or }1,\quad \text{a.e.}
\end{equation}
and written as
\begin{equation}\label{SBM}
 \begin{aligned}
       & \mathcal{F}^{(s)}:= \{u\in L^2(\mathbf{R}):\;u\ll s,\;\int_{\mathbf{R}}\left(\frac{du}{ds}\right)^2ds<\infty\}, \\
      &  \mathcal{E}^{(s)}(u,v):= \frac{1}{2}\int_{\mathbf{R}} \frac{du}{ds}\frac{dv}{ds}ds,\quad u,v\in \mathcal{F}^{(s)},
        \end{aligned}
    \end{equation}
   where $u\ll s$ means $u$ is absolutely continuous with respect to $s$. The function $s$ is usually called the scaling function of $(\mathcal{E}^{(s)},\mathcal{F}^{(s)})$, see \cite{CM} and \cite{RW}. Denote the extended Dirichlet spaces of $(\mathcal{E,F})$ and $(\mathcal{E}^{(s)},\mathcal{F}^{(s)})$ by $\mathcal{F}_\mathrm{e}$ and $\mathcal{F}^{(s)}_\mathrm{e}$. Define
 \begin{equation}\label{EQGXM}
 G:=\{x\in \mathbf{R}: s'(x)=1\}.
 \end{equation}
Clearly $G$ satisfies that 
\begin{equation}\label{EQMGCA}
m(G\cap (a,b))>0
\end{equation}
 for any open interval $(a,b)$. The regular subspace $(\mathcal{E}^{(s)},\mathcal{F}^{(s)})$ is a proper one if and only if the Lebesgue measure of $F:=G^c$ is positive. One may see that each regular subspace of 1-dimensional Brownian motion is also characterized by a set $G$ satisfying \eqref{EQMGCA}.
 Throughout this paper we shall always make the same assumption as \cite{LY2}:
\begin{description}
\item[(\textbf{H})] $G$ is open. 
\end{description}
Note that $G$ is defined in the sense of almost everywhere. Thus the above assumption means $G$ has an open  version. Since the typical example of $G$ is the complement of generalized Cantor type set, this assumption is reasonable. Under the assumption (\textbf{H}), one can deduce from \eqref{EQMGCA} that $F$ is the boundary of open set $G$. We also refer more notes about this assumption to \cite{LY2}. In particular, we may write 
\begin{equation}
G=\cup_{n=1}^\infty I_n,
\end{equation}
where $\{I_n=(a_n,b_n):n\geq 1\}$ is a series of disjoint open intervals.

In \cite{LY2} the authors investigated the significance of pseudo orthogonal complement of $\mathcal{F}^{(s)}_\mathrm{e}$ in $\mathcal{F}_\mathrm{e}$ relative to $\mathcal{E}$. In this paper we shall research the `real' orthogonal complement of regular subspace relative to the inner product $\mathcal{E}_\alpha$. Our main results imply that this orthogonal complement has a very close connection with $\alpha$-harmonic equation under Neumann boundary condition.

\section{The $\alpha$-orthogonal complement of $(\mathcal{E}^{(s)},\mathcal{F}^{(s)})$}\label{S2}

In \cite{LY2}, the authors defined the pseudo orthogonal complement of $\mathcal{F}_\mathrm{e}^{(s)}$ in $\mathcal{F}_\mathrm{e}$ relative to $\mathcal{E}$ formally by
\begin{equation}\label{EQMGS}
\mathcal{G}^{(s)}:=\{u\in \mathcal{F}_\text{e}:\mathcal{E}(u,v)=0\text{ for any }v\in \mathcal{F}^{(s)}_\text{e}\}.
\end{equation}
It indicates in Theorem~3.1 of \cite{LY2} that 
\begin{equation}\label{EQGSB}
	\mathcal{G}^{(s)}=\big\{u\in \mathcal{F}_\text{e}:u'\text{ is a constant a.e. on }G\big\},
\end{equation}
and any function $u\in \mathcal{F}_\mathrm{e}$ can be written as
\begin{equation}\label{EQUUU}
	u=u_1+u_2,
\end{equation}
where $u_1\in \mathcal{F}^{(s)}_\mathrm{e}$ and $u_2\in \mathcal{G}^{(s)}$. Since the inner product $\mathcal{E}$ is not complete, the decomposition \eqref{EQUUU} may only be unique up to a constant. 

In this section let us consider the complete inner product $\mathcal{E}_\alpha$ for a fixed constant $\alpha>0$. Note that $\mathcal{F}$ and $\mathcal{F}^{(s)}$ are both Hilbert spaces relative to the inner product $\mathcal{E}_\alpha$. In other words, $\mathcal{F}^{(s)}$ is a closed subspace of $\mathcal{F}$ relative to $\mathcal{E}_\alpha$. Hence we can define the natural orthogonal complement of $\mathcal{F}^{(s)}$ in $\mathcal{F}$ relative to $\mathcal{E}_\alpha$ by
\begin{equation}
	\mathcal{G}^{(s)}_\alpha:=\{u\in \mathcal{F}:\mathcal{E}_\alpha(u,v)=0\text{ for any }v\in \mathcal{F}^{(s)}\}. 
\end{equation}
When $\alpha=0$, the similar definition is represented by \eqref{EQMGS}. We may also write
\begin{equation}\label{EQFFS}
\mathcal{F}=\mathcal{F}^{(s)}\oplus_{\mathcal{E}_\alpha} \mathcal{G}^{(s)}_\alpha.
\end{equation} 
Clearly any function $u\in \mathcal{F}$ can be uniquely expressed as a sum of two functions in $\mathcal{F}^{(s)}$ and $\mathcal{G}^{(s)}_\alpha$ respectively. Denote the $\mathcal{G}^{(s)}_\alpha$-part of $u$ in this decomposition by $P_{\mathcal{G}^{(s)}_\alpha} u$. Then this decomposition can be written as
\begin{equation}\label{EQUUP}
	u=(u-P_{\mathcal{G}^{(s)}_\alpha} u)+P_{\mathcal{G}^{(s)}_\alpha} u.
\end{equation}
In particular, $u-P_{\mathcal{G}^{(s)}_\alpha} u\in \mathcal{F}^{(s)}$. 

Now we take a position to discuss the connection between the decomposition \eqref{EQFFS} and another decomposition induced by the part Dirichlet form and reduced function class. Recall that $G$ is an open set defined by \eqref{EQGXM} and the part Dirichlet form of $(\mathcal{E}^{(s)},\mathcal{F}^{(s)})$ on $G$ is defined by 
\[
	\begin{aligned}
		&\mathcal{F}^{(s)}_G:=\{u\in \mathcal{F}^{(s)}: u(x)=0,\; x\in F\},\\
		&\mathcal{E}^{(s)}_G(u,v):=\mathcal{E}^{(s)}(u,v),\quad u,v\in \mathcal{F}^{(s)}_G.
	\end{aligned}
\]
Similarly we can write the part Dirichlet form $(\mathcal{E}_G,\mathcal{F}_G)$ of $(\mathcal{E,F})$ on $G$. They are both regular Dirichlet forms on $L^2(G)$. Moreover, $\mathcal{F}_G$ (resp. $\mathcal{F}^{(s)}_G$) is a closed subspace of $\mathcal{F}$ (resp. $\mathcal{F}^{(s)}$). Their orthogonal complements are denoted by $\mathcal{H}^{\alpha}_F$ and $\mathcal{H}^{(s),\alpha}_F$, i.e. 
\begin{equation}\label{EQFFG}
\mathcal{F}=\mathcal{F}_G\oplus_{\mathcal{E}_\alpha} \mathcal{H}^\alpha_F,
\end{equation}
\begin{equation}\label{EQFFE}
\mathcal{F}^{(s)}=\mathcal{F}^{(s)}_G\oplus_{\mathcal{E}_\alpha}\mathcal{H}^{(s),\alpha}_F.
\end{equation}
For any $u\in \mathcal{F}$ (resp. $u\in \mathcal{F}^{(s)}$), its $\mathcal{H}^\alpha_F$-part (resp. $\mathcal{H}^{(s),\alpha}_F$-part) in the decomposition \eqref{EQFFG} (resp. \eqref{EQFFE}) may also be written as $H^\alpha_Fu$ (resp. $H^{(s),\alpha}_Fu$). 
The following lemma is taken directly from Lemma~2.1 and 2.2 of \cite{LY2}. Recall that $F$ is the complement of $G$. 

\begin{lemma}\label{Lemma1}\quad
It holds that
 \[
\mathcal{F}^{(s)}=\{u\in \mathcal{F}: u'=0\text{ a.e. on }F\}.\]
 Furthermore $(\mathcal{E}_G,\mathcal{F}_G)=(\mathcal{E}^{(s)}_G,\mathcal{F}^{(s)}_G)$. 
\end{lemma}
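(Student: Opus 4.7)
The plan is to derive both statements by exploiting the pointwise identity $s'=\mathbf{1}_G$ (a.e.) together with the fundamental theorem of calculus for absolutely continuous functions. The characterization in \eqref{SBM} tells us that membership in $\mathcal{F}^{(s)}$ is about absolute continuity with respect to $s$, while membership in $\mathcal{F}=H^1(\mathbf{R})$ is about absolute continuity with respect to Lebesgue measure; the key observation is that since $s$ is Lipschitz with $s'\in\{0,1\}$, these two notions differ precisely by what happens on $F$.

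For the first identity, I would argue both inclusions. For the forward inclusion, take $u\in\mathcal{F}^{(s)}$. Since $s$ is absolutely continuous and strictly increasing, $u\ll s$ forces $u$ itself to be absolutely continuous (on each bounded interval) with derivative $u'(x)=(du/ds)(x)\,s'(x)$ almost everywhere; as $s'=0$ a.e.\ on $F$, this yields $u'=0$ a.e.\ on $F$. Moreover the square-integrability condition in \eqref{SBM} becomes $\int_G (u')^2\,dx=\int_{\mathbf{R}}(u')^2\,dx<\infty$, confirming $u\in\mathcal{F}$.

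For the reverse inclusion, suppose $u\in\mathcal{F}$ satisfies $u'=0$ a.e.\ on $F$. Because $u$ is absolutely continuous, for any $x<y$ the fundamental theorem of calculus gives
\begin{equation*}
u(y)-u(x)=\int_x^y u'(t)\,dt=\int_x^y u'(t)\mathbf{1}_G(t)\,dt=\int_x^y u'(t)\,ds(t),
\end{equation*}
where the last equality uses that $ds$ has Radon--Nikodym density $\mathbf{1}_G$ with respect to Lebesgue measure. This exhibits $u$ as the indefinite $ds$-integral of the function $u'$, proving $u\ll s$ with $du/ds=u'$ on $G$; the finiteness of $\int(du/ds)^2\,ds$ follows from $u\in\mathcal{F}$.

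For the second identity, I would first invoke the standard description of the part Dirichlet form, namely $\mathcal{F}_G=\{u\in\mathcal{F}:u=0\text{ on }F\}$ (using continuous versions, which exist since $\mathcal{F}=H^1(\mathbf{R})$) and analogously for $\mathcal{F}_G^{(s)}$. Given $u\in\mathcal{F}_G$, I need to upgrade the boundary condition $u\equiv 0$ on $F$ to the condition $u'=0$ a.e.\ on $F$ in order to apply the first part of the lemma and conclude $u\in\mathcal{F}^{(s)}$. This is the main technical step: it rests on the classical fact that if an absolutely continuous function vanishes on a measurable set, then its derivative vanishes almost everywhere on that set, which follows from the Lebesgue density theorem applied at density points of $F$ where $u'$ exists. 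Once this is established, the equality $\mathcal{F}_G=\mathcal{F}_G^{(s)}$ follows, and the equality of the bilinear forms is immediate from the very definition of regular subspace, since $\mathcal{E}=\mathcal{E}^{(s)}$ on $\mathcal{F}^{(s)}\times\mathcal{F}^{(s)}$. The main obstacle is precisely this density-point argument; everything else is bookkeeping between the two flavors of absolute continuity.
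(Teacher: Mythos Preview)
Your argument is correct in all essentials. Note, however, that the paper does not prove this lemma at all: it is simply quoted from Lemmas~2.1 and~2.2 of \cite{LY2}, so there is no in-paper proof to compare against. What you have written supplies precisely the details that the citation defers---the chain-rule computation $u'=(du/ds)\,s'=(du/ds)\,\mathbf{1}_G$ for the first identity, and the Lebesgue density-point fact (an absolutely continuous function vanishing on a set has derivative zero a.e.\ on that set) for the inclusion $\mathcal{F}_G\subset\mathcal{F}^{(s)}_G$. Both are standard and your identification of the density-point step as the only nontrivial ingredient is accurate; the reverse inclusion $\mathcal{F}^{(s)}_G\subset\mathcal{F}_G$ is immediate from the first part since $\mathcal{F}^{(s)}\subset\mathcal{F}$.
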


The significant result of above lemma is $\mathcal{F}_G=\mathcal{F}^{(s)}_G$. As a sequel, the two decompositions \eqref{EQFFG} and \eqref{EQFFE} have a common component. On the other hand, the orthogonal complement of $\mathcal{F}^{(s)}$ in $\mathcal{F}$ is exactly $\mathcal{G}^{(s)}_\alpha$. Thus we can deduce that
\[
	\mathcal{F}=\mathcal{F}^{(s)}_G\oplus_{\mathcal{E}_\alpha} \mathcal{H}^{(s),\alpha}_F \oplus_{\mathcal{E}_\alpha} \mathcal{G}^{(s)}_\alpha=\mathcal{F}_G\oplus_{\mathcal{E}_\alpha} \mathcal{H}^{(s),\alpha}_F \oplus_{\mathcal{E}_\alpha} \mathcal{G}^{(s)}_\alpha
\]
and
\begin{equation}\label{EQHFH}
\mathcal{H}^\alpha_F=\mathcal{H}^{(s),\alpha}_F \oplus_{\mathcal{E}_\alpha} \mathcal{G}^{(s)}_\alpha.
\end{equation}
Note that the special case $\alpha=0$ has already been discussed in \cite{LY2}. 

The following theorem is our main result of this section which obtains an expression of $\mathcal{G}^{(s)}_\alpha$. Note that it is an extension of Theorem~3.1 of \cite{LY2}. 

\begin{theorem}\label{THM1}\quad
Fix a constant $\alpha\geq 0$. Then 
	\begin{equation}\label{EQGSA}
		\mathcal{G}^{(s)}_\alpha=\{u\in \mathcal{F}:u'(x)-u'(y)=2\alpha\int_y^x u(z)dz,\quad \text{a.e. }x,y\in G\}.
	\end{equation}
	In particular, when $\alpha=0$, the above formula has the same form as \eqref{EQGSB}.
\end{theorem}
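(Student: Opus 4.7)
The plan is to combine Lemma~\ref{Lemma1} with two families of test functions in $\mathcal{F}^{(s)}$. By Lemma~\ref{Lemma1}, any $v\in\mathcal{F}^{(s)}$ satisfies $v'=0$ a.e.\ on $F$, so for $u\in\mathcal{F}$,
\[
2\mathcal{E}_\alpha(u,v)=\int_G u'v'\,dx+2\alpha\int_\mathbf{R} uv\,dx.
\]
First, I would test $u\in\mathcal{G}^{(s)}_\alpha$ against $v\in C_c^\infty(I_n)\subset \mathcal{F}^{(s)}_G$, so that the identity $\mathcal{E}_\alpha(u,v)=0$ becomes the distributional equation $u''=2\alpha u$ on each component $I_n$. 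Since the right-hand side lies in $L^2(I_n)$, $u'$ upgrades to an element of $H^1(I_n)$ with $(u')'=2\alpha u$ a.e., and the fundamental theorem of calculus yields $u'(x)-u'(y)=2\alpha\int_y^x u\,dz$ for a.e.\ $x,y\in I_n$. Fixing $x_0\in\mathbf{R}$ and setting $U(x):=\int_{x_0}^x u(z)\,dz$, the number $c_n:=u'(x)-2\alpha U(x)$ is a well-defined constant on $I_n$, and $u'$ acquires one-sided traces $u'(a_n^+),\,u'(b_n^-)$ at the endpoints.

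The remaining task in the forward direction is to show that all $c_n$ coincide. For any two indices $n\neq m$, I would pick $\phi_n\in C_c^\infty(I_n)$ and $\phi_m\in C_c^\infty(I_m)$ with $\int\phi_n=\int\phi_m=1$, and set $v(x):=\int_{-\infty}^x(\phi_n-\phi_m)(z)\,dz$; then $v$ is compactly supported, $v'=\phi_n-\phi_m$ vanishes on $F$, and $v\in\mathcal{F}^{(s)}$. Integration by parts on each $I_k$, summed over $k$, gives
\[
\int_G u'v'\,dx=\sum_k\bigl[u'(b_k^-)v(b_k)-u'(a_k^+)v(a_k)\bigr]-2\alpha\int_G uv\,dx,
\]
and substituting $u'|_{I_k}=c_k+2\alpha U$ splits the boundary sum into $\sum_k c_k[v(b_k)-v(a_k)]+2\alpha\sum_k[U(b_k)v(b_k)-U(a_k)v(a_k)]$. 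The second piece telescopes to $\int_G(uv+Uv')\,dx$, which equals $-\int_F uv\,dx$ because $(Uv)'$ integrates to zero over $\mathbf{R}$ (using compact support of $Uv$) and $v'$ vanishes on $F$. After this cancellation against the $+2\alpha\int_\mathbf{R} uv$ term, $2\mathcal{E}_\alpha(u,v)$ collapses to $\sum_k c_k[v(b_k)-v(a_k)]$, which for the specific $v$ above equals $c_n-c_m$, forcing $c_n=c_m$. The converse inclusion runs the same calculation in reverse: if $u'=c+2\alpha U$ a.e.\ on $G$ for a single constant $c$, the same manipulation collapses $2\mathcal{E}_\alpha(u,v)$ to $c\int_\mathbf{R} v'\,dx=0$ for every compactly supported $v\in\mathcal{F}^{(s)}$, and the regularity of $(\mathcal{E}^{(s)},\mathcal{F}^{(s)})$, which makes compactly supported elements $\mathcal{E}_\alpha^{1/2}$-dense in $\mathcal{F}^{(s)}$, extends this to the full space.

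The main technical obstacle is carrying out the telescoping step when $G$ has infinitely many components: absolute convergence of $\sum_k[U(b_k)v(b_k)-U(a_k)v(a_k)]$ to $\int_G(Uv)'\,dx$ and the clean cancellation of the two $F$-integrals both rely on choosing test functions $v$ with compact support, so that $Uv$ and $(Uv)'$ remain globally integrable on $\mathbf{R}$. Once this Neumann-type flux identity is in place, the characterization of $\mathcal{G}^{(s)}_\alpha$ follows directly, and specializing to $\alpha=0$ reduces the identity to ``$u'$ is constant a.e.\ on $G$'', consistent with \eqref{EQGSB}.
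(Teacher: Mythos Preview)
Your argument is correct, but it takes a genuinely different route from the paper. The paper works with a single family of test functions $v=\phi\circ s$, $\phi\in C_c^\infty(s(\mathbf{R}))$, and computes directly
\[
\mathcal{E}_\alpha(u,v)=\int \phi'(s(y))\,ds(y)\Bigl[\tfrac12 u'(y)-\alpha\int_a^y u\Bigr];
\]
since $ds$ is supported on $G$, the change of variables $x=s(y)$ reduces the orthogonality condition to ``$h\circ t$ is constant a.e.'' for $h(y)=\tfrac12 u'(y)-\alpha\int_a^y u$ and $t=s^{-1}$, which pulls back to ``$h$ is constant a.e.\ on $G$'' in one stroke, with no separate linking step and no integration by parts on the components $I_n$. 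Your approach instead splits the problem into a local PDE step on each $I_n$ (via $C_c^\infty(I_n)$) followed by a global linking step (via the antiderivative of $\phi_n-\phi_m$), and then a telescoping/flux computation over all components. This is more hands-on and avoids the density of $C_c^\infty\circ s$ in $\mathcal{F}^{(s)}$, relying only on Lemma~\ref{Lemma1} and the regularity of $(\mathcal{E}^{(s)},\mathcal{F}^{(s)})$; the price is the bookkeeping with infinite sums and boundary traces of $u'$, which the paper's change-of-variables argument sidesteps entirely.
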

\begin{proof}\quad
Denote the set of right side of \eqref{EQGSA} by $\mathcal{G}$. Fix two functions $u\in \mathcal{F}$ and $v=\phi\circ s$ with some function $\phi\in C_c^\infty(s(\mathbf{R}))$, where $s(\mathbf{R})=\{s(x):x\in \mathbf{R}\}$ is an open interval of $\mathbf{R}$. Suppose $\text{supp}[v]\subset I$ with some interval $I=(a,b)$. Clearly
	\[
		v(x)=-\int_x^b \phi'(s(y))ds(y).
	\]
Then
\[
\begin{aligned}
	\int_\mathbf{R} u(x)v(x)dx&=-\int_a^b u(x)dx	\int_x^b \phi'(s(y))ds(y)\\
		&=-\int_a^b \phi'(s(y))ds(y)\int_a^y u(x)dx,
\end{aligned}\]
and thus
\begin{equation}\label{EQMEU}
\begin{aligned}
	\mathcal{E}_\alpha(u,v)&=-\alpha\int_a^b \phi'(s(y))ds(y)\int_a^y u(x)dx+\frac{1}{2}\int_a^b u'(y)\phi'(s(y))ds(y)\\
	&=\int_a^b \phi'(s(y))ds(y)\bigg [\frac{1}{2}u'(y)-\alpha\int_a^yu(x)dx\bigg ].
\end{aligned}\end{equation}
Now assume that $u\in \mathcal{G}$. Then for a.e. $x,y\in I\cap G$, we have 
\[
	\frac{1}{2} u'(x)-\alpha\int_a^x u(z)dx=\frac{1}{2}u'(y)-\alpha\int_a^y u(z)dz\equiv C,
\]
where $C$ is a constant. It follows from \eqref{EQGXM} and \eqref{EQMEU} that
\[
	\mathcal{E}_\alpha(u,v)=C\cdot \int_a^b \phi'(s(x))ds(x)=C\cdot \int_{s(I)} \phi'(x)dx=0.
\]
Since $C_c^\infty\circ s:=\{\phi\circ s:\phi\in C_c^\infty(s(\mathbf{R}))\}$ is $\mathcal{E}^{(s)}_1$-dense in $\mathcal{F}^{(s)}$ (see \cite{CM} and \cite{XPJ}), we can deduce that $u\in \mathcal{G}^{(s)}_\alpha$. Therefore 
\[
	\mathcal{G}\subset \mathcal{G}^{(s)}_\alpha.
\] 
On the contrary, assume $u\in \mathcal{G}^{(s)}_\alpha$. Then 
\[
	\mathcal{E}_\alpha(u,v)=0
\]
for any $v=\phi\circ s$ with $\phi\in C_c^\infty(s(\mathbf{R}))$. We still assume $\text{supp}[v]\subset I$. Let $t$ be the inverse function of $s$, i.e. $t=s^{-1}$ and 
\[
	h(y):=\frac{1}{2}u'(y)-\alpha\int_a^yu(x)dx,\quad y\in I.
\]
Then 
\[
	\mathcal{E}_\alpha(u,v)=\int_{s(I)} \phi'(x)h(t(x))dx=0
\]
for any $\phi\in C_c^\infty(s(I))$. Thus $h(t(x))$ is a constant for a.e. $x\in s(I)$ (see \cite{AF}). Denote all $x\in s(I)$ such that $h(t(x))$ is a constant by $H$ and $\tilde{H}:=t(H)$. Then $h$ is a constant on $\tilde{H}$. We claim that the Lebesgue measure of $(I\setminus \tilde{H})\cap G$ is zero, in other words, $h$ is a constant a.e. on $I\cap G$. In fact, its Lebesgue measure
\[
	|(I\setminus \tilde{H})\cap G|=\int_{I\setminus \tilde{H}} 1_G(x)dx=\int_{I\setminus \tilde{H}}ds=|s(I)\setminus H|=0.
\]
It follows from $h(x)=h(y)$ for a.e. $x,y$ on $I\cap G$ that 
\[
	u'(x)-u'(y)=2\alpha\int_y^xu(z)dz
\]
for a.e. $x,y \in I\cap G$. Since $I$ can be taken as arbitrary open interval of $\mathbf{R}$, we can deduce that $u\in \mathcal{G}$. Therefore $\mathcal{G}^{(s)}_\alpha\subset \mathcal{G}$. That completes the proof.
\end{proof}

\section{The characteristic equation}

Fix a constant $\alpha>0$ and take a function $u$ in $\mathcal{G}^{(s)}_\alpha$. Recall that $G=\cup_{n=1}^\infty I_n$ where $\{I_n:n\geq 1\}$ is a series of disjoint open intervals. For any $n\geq 1$, it follows from Theorem~\ref{THM1} that
\[
	u'(x)-u'(y)=2\alpha\int_y^x u(z)dz, \quad x,y \in I_n.
\]
Hence a version of $u'$, which is still denoted by $u'$, is absolutely continuous on $I_n$ and 
\[
	\frac{1}{2}u''(x)=\alpha u(x),\quad \text{a.e. }x\in I_n. 
\]
Since $n$ is arbitrary, we conclude that a version of $u'$, which is still denoted by $u'$ , is absolutely continuous on $G$ and 
\begin{equation}\label{EQUXA}
	\frac{1}{2}u''(x)=\alpha u(x),\quad \text{a.e. }x\in G. 
\end{equation}
It is well known that the harmonic equation $1/2u''=\alpha u$ has an essential connection with the Brownian motion, see \cite{IM}. Moreover for high-dimensional Brownian motions, the solutions to $\alpha$-harmonic equation
\[
	\frac{1}{2}\Delta u=\alpha u
\]
on a domain $D$ completely characterize the trace of Brownian motion on the boundary $\partial D$ of $D$, see Example~1.2.3 of \cite{FU}. On the other hand, as outlined in Lemma~\ref{Lemma1} the part Dirichlet form of regular subspace $(\mathcal{E}^{(s)},\mathcal{F}^{(s)})$ on $G$ is the same as that of $(\mathcal{E,F})$. That means the difference, roughly speaking some feature of $\mathcal{G}^{(s)}_\alpha$, between the regular subspace and 1-dim Brownian motion concentrates on $F=G^c$, i.e. the boundary of $G$. In the rest of this section we shall explain the connection between $\mathcal{G}^{(s)}_\alpha$ and characteristic equation \eqref{EQUXA}, but unfortunately, \eqref{EQUXA} is only a necessary (not sufficient) condition for a function being in $\mathcal{G}^{(s)}_\alpha$. Loosely speaking, the insufficiency comes from a component $\mathcal{H}^{(s),\alpha}_F$ in \eqref{EQHFH}, i.e. the difference between $\mathcal{H}^\alpha_F$ and $\mathcal{G}^{(s)}_\alpha$. 

For a given function $f\in \mathcal{F}(=H^1(\mathbf{R}))$, another function $u$ is said to be a solution to the equation \eqref{EQUXA} with the Neumann boundary condition
\begin{equation}\label{EQUFA}
u'(x)=f'(x),\quad \text{a.e. } x\in F,
\end{equation}
if $u\in H^1(\mathbf{R})$, a version of $u'$ is absolutely continuous on $G$ and $u$ satisfies \eqref{EQUXA} and \eqref{EQUFA}. Clearly the solutions to equation \eqref{EQUXA} with the Neumann boundary condition \eqref{EQUFA} always exist. In fact, it follows from the discussions above and Theorem~\ref{THM1} that $$u:=P_{\mathcal{G}^{(s)}_\alpha}f$$ satisfies all conditions except for \eqref{EQUFA}. From \eqref{EQUUP} and Lemma~\ref{Lemma1} we conclude that $f-u\in \mathcal{F}^{(s)}$ and hence $(f-u)'=0$ a.e. on $F$. Therefore $u$ also satisfies \eqref{EQUFA}, and it is actually a solution to equation \eqref{EQUXA} with the Neumann boundary condition \eqref{EQUFA}. The following theorem illustrates that the solutions are not unique and we also give all solution to \eqref{EQUXA} and \eqref{EQUFA}. 

\begin{theorem}\label{THM2}\quad 
Fix a function $f\in H^1(\mathbf{R})$. All solutions to the equation \eqref{EQUXA} with the Neumann boundary condition \eqref{EQUFA} are
\[
	\{P_{\mathcal{G}^{(s)}_\alpha}f+h:h\in \mathcal{H}^{(s),\alpha}_F\}.
\]
\end{theorem}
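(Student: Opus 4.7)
The plan is to establish the set equality by proving both inclusions, drawing on the preceding paragraph's verification that $P_{\mathcal{G}^{(s)}_\alpha}f$ is itself a solution, together with the decomposition \eqref{EQHFH} and Lemma~\ref{Lemma1}.

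For the inclusion $\supseteq$, I would fix an arbitrary $h\in\mathcal{H}^{(s),\alpha}_F$ and check that $P_{\mathcal{G}^{(s)}_\alpha}f+h$ remains a solution. Since $h\in\mathcal{F}^{(s)}$, Lemma~\ref{Lemma1} gives $h'=0$ a.e.\ on $F$, so the Neumann condition \eqref{EQUFA} is preserved. The decomposition \eqref{EQHFH} places $h$ inside $\mathcal{H}^\alpha_F$, so $\mathcal{E}_\alpha(h,v)=0$ for every $v\in\mathcal{F}_G$; testing against $\phi\in C_c^\infty(G)\subset\mathcal{F}_G$ yields $-\tfrac{1}{2}h''+\alpha h=0$ distributionally on $G$. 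Standard ODE regularity applied on each $I_n$ then produces an absolutely continuous version of $h'$ on $G$ for which \eqref{EQUXA} holds pointwise a.e., so $P_{\mathcal{G}^{(s)}_\alpha}f+h$ fulfills all requirements of a solution.

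For the inclusion $\subseteq$, let $u$ be any solution and set $w:=u-P_{\mathcal{G}^{(s)}_\alpha}f$; the goal is $w\in\mathcal{H}^{(s),\alpha}_F$. First I would check $w\in\mathcal{F}^{(s)}$: by \eqref{EQUUP} the residual $f-P_{\mathcal{G}^{(s)}_\alpha}f$ lies in $\mathcal{F}^{(s)}$, so Lemma~\ref{Lemma1} forces $(P_{\mathcal{G}^{(s)}_\alpha}f)'=f'$ a.e.\ on $F$; combined with the Neumann condition $u'=f'$ a.e.\ on $F$, this gives $w'=0$ a.e.\ on $F$, hence $w\in\mathcal{F}^{(s)}$ by another appeal to Lemma~\ref{Lemma1}. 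Next I would verify $w\perp_{\mathcal{E}_\alpha}\mathcal{F}^{(s)}_G$: by Lemma~\ref{Lemma1}, $\mathcal{F}^{(s)}_G=\mathcal{F}_G$, which has $C_c^\infty(G)$ as an $\mathcal{E}_\alpha$-dense core, so it suffices to test against $v\in C_c^\infty(I_n)$ for a single $n$. Integration by parts on $I_n$ with vanishing boundary terms yields
\[
\mathcal{E}_\alpha(w,v)=\int_{I_n}\Bigl(-\tfrac{1}{2}w''(x)+\alpha w(x)\Bigr)v(x)\,dx=0,
\]
since both $u$ and $P_{\mathcal{G}^{(s)}_\alpha}f$ satisfy \eqref{EQUXA}, hence so does $w$. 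Therefore $w\in\mathcal{H}^{(s),\alpha}_F$ by \eqref{EQFFE}, finishing the inclusion.

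The main obstacle I anticipate is the careful justification that distributional validity of the ODE on $G$ promotes to full $\mathcal{E}_\alpha$-orthogonality against $\mathcal{F}_G$, which requires the $\mathcal{E}_\alpha$-density of $C_c^\infty(G)$ in the part Dirichlet space $(\mathcal{E}_G,\mathcal{F}_G)$ and attention to the countable decomposition $G=\bigcup_n I_n$ under integration by parts. Both facts are standard for regular Dirichlet forms but warrant explicit bookkeeping.
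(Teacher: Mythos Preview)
Your proposal is correct and follows essentially the same route as the paper: both arguments reduce the claim to showing that the solutions of \eqref{EQUXA} with homogeneous Neumann data $u'=0$ a.e.\ on $F$ coincide with $\mathcal{H}^{(s),\alpha}_F$, and both directions are obtained by combining Lemma~\ref{Lemma1} (to pass between the boundary condition and membership in $\mathcal{F}^{(s)}$) with testing against $C_c^\infty(G)$ and integration by parts on each $I_n$ (to pass between the ODE and $\mathcal{E}_\alpha$-orthogonality to $\mathcal{F}^{(s)}_G=\mathcal{F}_G$). The only cosmetic difference is that the paper states the homogeneous reduction explicitly at the outset and, in the direction $\mathcal{H}^{(s),\alpha}_F\Rightarrow$ solution, derives absolute continuity of $u'$ by showing $\tfrac{1}{2}u'(t)+\alpha\int_t^b u\,dx$ is constant rather than invoking ``standard ODE regularity''.
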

\begin{proof}
It is equivalent to prove that all solutions to equation \eqref{EQUXA} with the Neumann boundary condition
\begin{equation}\label{EQUQT}
	u'=0,\quad \text{a.e. }x\in F
\end{equation}
are exactly $\mathcal{H}^{(s),\alpha}_F$. First assume $u$ is a solution to equation \eqref{EQUXA} and \eqref{EQUQT}. It follows from Lemma~\ref{Lemma1} that $u\in \mathcal{F}^{(s)}$. Thus it suffices to prove that $u$ is $\mathcal{E}_\alpha$-orthogonal to every function in $C_c^\infty(G)$. To this end, for any $\phi\in C_c^\infty(G)$ it follows from \eqref{EQUXA} that
\[
\begin{aligned}
	\mathcal{E}_\alpha(u,\phi)&=\alpha\int u(x)\phi(x)dx+\frac{1}{2}\int u'(x)\phi'(x)dx	\\
		&=\int (\alpha u(x)-\frac{1}{2}u''(x))\phi(x)dx \\
		&=0. 
\end{aligned}\]
Therefore $u\in \mathcal{H}^{(s),\alpha}_F$. On the contrary, assume $u\in \mathcal{H}^{(s),\alpha}_F$. Since $\mathcal{H}^{(s),\alpha}_F\subset \mathcal{F}^{(s)}$, it follows from Lemma~\ref{Lemma1} that $u$ satisfies \eqref{EQUQT}. Note that $G$ is composed by a series of open intervals. Let $I=(a,b)$ be one of these intervals. For any $\phi\in C_c^\infty(I)\subset H^1_0(G)$,  from 
\[
	\phi(x)=\int_a^x \phi'(t)dt
\]
we obtain
\[
	0=\mathcal{E}_\alpha(u,\phi)=\int \phi'(t)\bigg( \frac{1}{2}u'(t)+\alpha\int_t^b u(x)dx\bigg)dt.
\]
Thus $1/2\cdot u'(t)+\alpha\int_t^b u(x)dx$ is a constant a.e. on $I$. We can deduce that a version of $u'$ (still denoted by $u'$) satisfies
\[
	\frac{1}{2}(u'(t_1)-u'(t_2))=\alpha\int_{t_2}^{t_1} u(x)dx,\quad t_1,t_2\in I.
\]
Hence $u'$ is absolutely continuous on $I$ and
\[
	\frac{1}{2}u''(x)=\alpha u(x),\quad \text{a.e. }x\in I.
\]
Then it follows that $u$ is a solution to equation \eqref{EQUXA} and \eqref{EQUQT}. That completes the proof.
\end{proof}

Finally we shall give an analogical result of Theorem~\ref{THM2} for $\alpha=0$. Fix a function $f\in H^1_\text{e}(\mathbf{R})$, where $$H^1_\mathrm{e}(\mathbf{R}):=\{f: f\text{ is absolutelty continuous and }f'\in L^2(\mathbf{R})\},$$ and consider the equation
\begin{equation}\label{EQUQTA}
	u''=0\quad \text{a.e. on }G
\end{equation}
with the Neumann boundary condition
\begin{equation}\label{EQUFQ}
	u'(x)=f'(x),\quad \text{a.e. }x\in F.
\end{equation}
A function $u\in H^1_\text{e}(\mathbf{R})$ is said to be a solution to equation \eqref{EQUQTA} with the Neumann boundary condition \eqref{EQUFQ} if a version of $u'$ is absolutely continuous on $G$ and $u$ satisfies \eqref{EQUQTA} and \eqref{EQUFQ}. Assume $P_{\mathcal{G}^{(s)}}f$ is (one of) $\mathcal{G}^{(s)}$-part in the orthogonal decomposition \eqref{EQUUU} with respect to $f$. Note that $P_{\mathcal{G}^{(s)}}f$ can be taken uniquely when $(\mathcal{E}^{(s)},\mathcal{F}^{(s)})$ is transient. Otherwise, it is unique up to a constant (see Theorem~3.1 of \cite{LY2}). It follows from \eqref{EQGSB}, \eqref{EQUUU} and Lemma~\ref{Lemma1} that $P_{\mathcal{G}^{(s)}}f$ is a special solution to equation \eqref{EQUQTA} with the Neumann boundary condition \eqref{EQUFQ}. 

Before giving all solutions to equation \eqref{EQUQTA} with the Neumann boundary condition \eqref{EQUFQ}, we need to make some notes. 
Clearly every constant function belongs to $H^1_\mathrm{e}(\mathbf{R})$ and satisfies \eqref{EQUQTA} and the boundary condition 
\begin{equation}\label{EQUAXF}
u'=0,\quad \text{a.e. } x\in F.
\end{equation}
On the other hand, one can easily prove that the scaling function  $s\in H^1_\text{e}(\mathbf{R})$ if and only if $s(-\infty)>-\infty$ and $s(\infty)<\infty$, equivalently the Lebesgue measure of $G$ is finite. Since $s'=1$ on $G$ and $s'=0$ on $F$, we may conclude $s$ also satisfies \eqref{EQUQTA} and \eqref{EQUAXF}. Note that $\mathcal{H}^{(s)}_F$ (resp. $\mathcal{H}_F$) is the second component in the analogical decomposition \eqref{EQFFE} (resp. \eqref{EQFFG}) for $\alpha=0$ and we refer more details to \cite{FU} and \S3 of \cite{LY2}. 

\begin{theorem}\quad 
Fix a function $f\in H^1_\mathrm{e}(\mathbf{R})$.	When $s(-\infty)>-\infty$ and $s(\infty)<\infty$, all solutions to equation \eqref{EQUQTA} with the Neumann boundary condition \eqref{EQUFQ} are
	\[
		\{P_{\mathcal{G}^{(s)}}f+h+C_1s+C_0:h\in \mathcal{H}^{(s)}_F,C_1,C_0\text{ are two constants}\}.
	\]
	Otherwise all solutions to equation \eqref{EQUQTA} with the Neumann boundary condition \eqref{EQUFQ} are
	\[
		\{P_{\mathcal{G}^{(s)}}f+h+C_0:h\in \mathcal{H}^{(s)}_F,C_0\text{ is a constant}\}.
	\]
\end{theorem}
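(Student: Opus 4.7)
The proof follows the blueprint of Theorem~\ref{THM2}, now adapted to the extended Dirichlet space $H^1_\mathrm{e}(\mathbf{R})$, on which $\mathcal{E}$ is only a semi-inner product; this incompleteness is what introduces the extra summands $C_0$ and (when admissible) $C_1 s$.

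First I would reduce to the homogeneous problem: setting $v := u - P_{\mathcal{G}^{(s)}}f$, the particular-solution remark preceding the theorem shows $v \in H^1_\mathrm{e}(\mathbf{R})$ and $v$ solves $v'' = 0$ a.e.\ on $G$, $v' = 0$ a.e.\ on $F$. It therefore suffices to show that the homogeneous-solution space equals $\mathcal{H}^{(s)}_F + \mathrm{span}\{s, 1\}$ when $s \in H^1_\mathrm{e}(\mathbf{R})$, and $\mathcal{H}^{(s)}_F + \mathrm{span}\{1\}$ otherwise. Sufficiency is direct: constants solve the homogeneous problem trivially; $s$ satisfies $s' = 1_G$, giving $s'' = 0$ on $G$ and $s' = 0$ on $F$; and any $h \in \mathcal{H}^{(s)}_F$ has $h' = 0$ on $F$ by the extended version of Lemma~\ref{Lemma1} and $h'' = 0$ on $G$ via $\mathcal{E}$-orthogonality of $h$ to $C_c^\infty(G)$, by the same distributional integration-by-parts as in the proof of Theorem~\ref{THM2}.

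For necessity, given a homogeneous solution $v$, integrating $v'' = 0$ on each $I_n$ together with $v' = 0$ on $F$ gives $v' = \sum_n \alpha_n 1_{I_n}$ a.e., with $\sum_n \alpha_n^2 |I_n| < \infty$ because $v' \in L^2(\mathbf{R})$. I would then select constants so that the remainder lies in $\mathcal{H}^{(s)}_F$. In the finite-$|G|$ case, Cauchy--Schwarz gives $v' \in L^1(\mathbf{R})$, so $v(\pm\infty)$ exist and are finite, and the $2 \times 2$ system $C_1 s(\pm\infty) + C_0 = v(\pm\infty)$ has determinant $s(\infty) - s(-\infty) = |G| \neq 0$, uniquely determining $C_1, C_0$ such that $w := v - C_1 s - C_0$ has vanishing limits at $\pm\infty$; in the $|G| = \infty$ case, $s \notin H^1_\mathrm{e}(\mathbf{R})$, and only the single constant $C_0$ is both available and needed. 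The orthogonality $\mathcal{E}(w, \phi) = 0$ for all $\phi \in \mathcal{F}_G$ is then obtained from exactly the calculation used in the second half of the proof of Theorem~\ref{THM2}.

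The main obstacle is verifying that $w$ genuinely lands in $\mathcal{H}^{(s)}_F$ rather than merely in the larger $\mathcal{F}^{(s)}_\mathrm{e}$: the chosen $C_1, C_0$ must absorb exactly the asymptotic degrees of freedom distinguishing the two spaces, and in the recurrent case $|G| = \infty$ one has to rule out any $s$-direction intrinsically by invoking $s \notin H^1_\mathrm{e}(\mathbf{R})$. The whole argument hinges on a clean description of the canonical complement $\mathcal{H}^{(s)}_F$ in the pseudo-orthogonal decomposition of $\mathcal{F}^{(s)}_\mathrm{e}$ (cf.\ \cite{FU} and \S3 of \cite{LY2}); once that is in hand, the rest is a variant of the Theorem~\ref{THM2} computation.
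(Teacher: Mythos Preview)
Your reduction to the homogeneous problem and the sufficiency direction are fine and match the paper. The difference is in the necessity direction, where you and the paper proceed quite differently.

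The paper does \emph{not} attempt to pick $C_1,C_0$ explicitly and then verify that the remainder lies in $\mathcal{H}^{(s)}_F$. Instead, having shown (exactly as in Theorem~\ref{THM2}) that the homogeneous solution $g$ lies in $\mathcal{H}_F$, it invokes Proposition~3.1 of \cite{LY2}, the $\alpha=0$ analogue of \eqref{EQHFH}, to write $g=g_1+g_2$ with $g_1\in\mathcal{H}^{(s)}_F$ and $g_2\in\mathcal{G}^{(s)}$. Both summands are then homogeneous solutions, so the characterization \eqref{EQGSB} forces $g_2'=C_1\cdot 1_G=C_1 s'$, hence $g_2=C_1 s+C_0$. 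The case split is then immediate: when $|G|=\infty$ one has $s\notin H^1_\mathrm{e}(\mathbf{R})$ while $g_2\in\mathcal{G}^{(s)}\subset H^1_\mathrm{e}(\mathbf{R})$, which forces $C_1=0$.

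This completely sidesteps what you correctly flag as your ``main obstacle'': you never need a concrete description of $\mathcal{H}^{(s)}_F$, because the black-box decomposition of $\mathcal{H}_F$ hands you $g_1\in\mathcal{H}^{(s)}_F$ for free, and it is the \emph{other} summand $g_2\in\mathcal{G}^{(s)}$ that has the clean characterization. Your route---matching $v(\pm\infty)$ via a $2\times2$ system in the finite case, and an unspecified $C_0$ in the infinite case---would require you to pin down exactly which elements of $\mathcal{F}^{(s)}_\mathrm{e}$ that are $\mathcal{E}$-orthogonal to $\mathcal{F}^{(s)}_{G,\mathrm{e}}$ actually belong to $\mathcal{H}^{(s)}_F$; in the extended (semi-inner-product) setting that is precisely the delicate point, and your proposal does not resolve it. The paper's use of the prior structural result is what makes the argument short.
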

\begin{proof}
	Similar to Theorem~\ref{THM2}, all functions in two classes above are  the solutions to  equation \eqref{EQUQTA} with the Neumann boundary condition \eqref{EQUFQ}. We only need to prove any solution $u$ can be expressed as the above form. Let 
	\[
		g:=u-P_{\mathcal{G}^{(s)}}f.
	\]
Then a version of $g'$ is still absolutely continuous and 
\begin{equation}\label{EQQQT}
	g''(x)=0\quad \text{a.e. } x\in G,	
\end{equation}
\begin{equation}\label{EQGQT}
g'(x)=0\quad \text{a.e. }x\in F.
\end{equation}
Similar to Theorem~\ref{THM2},  we may obtain $g\in \mathcal{H}_{F}$. Then it follows from Proposition~3.1 of \cite{LY2} that $g$ can be expressed as
\[
	g=g_1+g_2
\]
for some $g_1\in \mathcal{H}^{(s)}_F$ and $g_2\in \mathcal{G}^{(s)}$. Apparently $g_1$ satisfies \eqref{EQQQT} and \eqref{EQGQT}. Hence so does $g_2$. From \eqref{EQGSB} and $g_2\in \mathcal{G}^{(s)}$, we can deduce that there exists a constant $C_1$ such that 
\[
	g'_2=C_1\cdot 1_G=C_1s'\quad \text{a.e.}
\]
As a sequel $g_2=C_1s+C_0$ for another constant $C_0$. Recall that any constant function belongs to $ H^1_\text{e}(\mathbf{R})$. Moreover $s\in H^1_\text{e}(\mathbf{R})$ if and only if  $s(-\infty)>-\infty$ and $s(\infty)<\infty$. That completes the proof.
\end{proof}

\section*{Acknowledgement}
The authors would like to thank Professor Jiangang Ying for many helpful discussions.

\end{document}